\theoremstyle{plain}
\newtheorem{thm}{Theorem}%[section]
\newtheorem{lemma}[thm]{Lemma}
\newtheorem{prop}[thm]{Proposition}
 \theoremstyle{remark}
 \newtheorem{remark}[thm]{Remark}
\newcommand{\al}{\alpha}
\newcommand{\be}{\beta}
\newcommand{\ga}{\gamma}
\newcommand{\ep}{\varepsilon}
\renewcommand{\th}{\theta}
\newcommand{\la}{\lambda}
\newcommand{\om}{\omega}
\newcommand{\si}{\sigma}
\newcommand{\Si}{\Sigma}
\newcommand{\ZZ}{{\mathbb Z}}
\newcommand{\CC}{{\mathbb C}}
\newcommand{\SL}{{\rm SL}}
\newcommand{\GL}{{\rm GL}}
\newcommand{\Aut}{\operatorname{Aut}}
\def\co{\colon\thinspace}
\begin{document}
\title[Metabelian SL$(n,\CC)$ representations of knot groups II]{Metabelian SL$(n,\CC)$ representations of knot groups II:  fixed points}

\author{Hans U. Boden}
\address{Department of Mathematics, McMaster University,
Hamilton, Ontario} \email{boden@mcmaster.ca}
\thanks{The first named author was supported by a grant from the Natural Sciences and Engineering Research Council of Canada.}

\author{Stefan Friedl}
\address{
University of Warwick, Coventry, UK}
\email{sfriedl@gmail.com}

\subjclass[2000]{Primary: 57M25, Secondary: 20C15}
\keywords{Metabelian representation, knot group, character variety,
% Zariski tangent space,
 group action, fixed point}

\date{\today}
\begin{abstract}
Given a knot $K$ in an integral homology sphere $\Si$ with exterior $N_K$,
there is a natural action of the cyclic group $\ZZ/n$ on the space of $\SL(n,\CC)$ representations
of the knot group $\pi_1(N_K)$, and this induces an action on the $\SL(n,\CC)$ character variety.
We identify the fixed points of this action in terms of characters of metabelian representations,
and we apply this to show that the twisted Alexander polynomial $\Delta^\al_{K,1}(t)$
associated to an irreducible metabelian $\SL(n,\CC)$
representation $\al$ is actually a polynomial in $t^n$.

%We then show that for any irreducible metabelian representation $\al$, we have
%$ \dim_{\CC} H^1(N_K;sl(n,\CC)_{\ad \al}) \geq n-1$. If equality holds,
%then we prove that the  character
%$\xi_\al$ is a smooth point in $X_n(N_K)$ and that there exists an $(n-1)$--dimensional family of
%characters of irreducible $\SL(n,\CC)$ representations of $\pi_1(N_K)$ near $\xi_\al$.
%We relate the cohomological condition that $\dim_{\CC} H^1(N_K;sl(n,\CC)_{\ad \al}) = n-1$ to the vanishing of $H_1(\widehat{\Si}_\varphi)$,
%where $\widehat{\Si}_\varphi$ denotes the associated metabelian branched cover
%of $\Si$ branched along $K$.
 \end{abstract}
\maketitle

%==========================================================
\section{Introduction}

Suppose $K$ is a knot. Throughout this paper we will always understand this to mean that $K$ is  an oriented simple closed curve in an integral homology 3-sphere $\Si$.
We write $N_K=\Si^3\smallsetminus \tau(K),$ where $\tau(K)$ denotes an open tubular neighborhood of $K$.

The study of metabelian representations and metabelian quotients of knot groups goes back to the pioneering work of
Neuwirth \cite{Ne65},  de Rham \cite{dRh68}, Burde \cite{Bu67}  and Fox \cite{Fo70}
(see also \cite[Section~14]{BZ03}).
The theory was further developed
by many authors, including Hartley \cite{Ha79,Ha83},  Livingston \cite{Li95}, Letsche \cite{Le00}, Lin \cite{Lin01}, Nagasato
\cite{Na07} and  Jebali \cite{Je08}.
In \cite{BF08} we proved a classification theorem for irreducible metabelian representations,
%and the  recent results of Abdelghani, Heusener, and Jebali \cite{AHJ07}
%regarding the deformation of metabelian representations.
and in this paper we continue  our study of metabelian representations of knot groups.

%The aforementioned papers for the most part studied the case of two and three dimensional representations.

We begin by introducing some terminology.
Given a topological space $M$, let $R_n(M)$ be the space
 of $\SL(n,\CC)$ representations of $\pi_1(M)$   and
 $X_n(M)$ the associated character variety.
 We use $\xi_\al$ to denote the character of the representation
 $\al \co \pi_1(M) \to \SL(n,\CC)$. We will often make use of the important fact
 that two irreducible representations
 determine the same character if and only if they are conjugate (see \cite[Corollary 1.33]{LM85}).

Now suppose $K$ is a knot.
There is an action  of the group $\ZZ/n$ on the representation variety
$R_n(N_K)$ given by twisting by the $n$--th roots of unity $\om^k= e^{2 \pi ik/n} \in U(1)$.
(This is a special case of the more general twisting operation described in \cite[Ch. 5]{LM85}.)
More precisely, we write $\ZZ/n = \langle \si \mid \si^n=1\rangle$ and
set
$(\si \cdot \al)(g) = \om^{\ep(g)} \al(g)$ for each  $g \in \pi_1(N_K)$,
where $\ep \co \pi_1(N_K) \to H_1(N_K) = \ZZ$ is determined by the given orientation of the knot.

This constructs an  action of $\ZZ/n$ on $R_n(N_K)$
which, in turn, descends to an action
on the character variety $X_n(N_K)$.
Our main result identifies the fixed points of
$\ZZ/n$ in $X_n^*(N_K)$, the irreducible characters, as those associated to
metabelian representations.

\begin{thm}\label{thm1}
The character $\xi_\al$ of an irreducible representation  $\al \co \pi_1(N_K) \to \SL(n,\CC)$ is
fixed under the $\ZZ/n$ action if and only if $\al$ is metabelian.
\end{thm}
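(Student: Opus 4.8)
Throughout I take $M=N_K$, so that the homomorphism $\ep\co \pi_1(N_K)\to \ZZ$ and the $\ZZ/n$--action are defined; write $\mu$ for a meridian, so $\ep(\mu)=1$, and recall that $\ker\ep=[\pi_1(N_K),\pi_1(N_K)]$ is the commutator subgroup since $H_1(N_K)=\ZZ$. Because $\al$ is irreducible, two irreducible representations have equal character precisely when they are conjugate, so I would first reformulate the fixed--point condition: $\xi_\al$ is fixed under $\si$ if and only if $\si\cdot\al\cong\al$, i.e.\ there is an invertible $C$ with
\[
C\al(g)C^{-1}=\om^{\ep(g)}\al(g)\qquad\text{for all }g\in\pi_1(N_K).
\]
The plan is to show that such an intertwiner $C$ forces a normal form in which $\al(\ker\ep)$ is diagonal and $\al(\mu)$ is a cyclic shift, and conversely that every irreducible metabelian representation has exactly this form.

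For the direction ``fixed $\Rightarrow$ metabelian'', I would start from $C$ as above. Iterating the relation gives $C^k\al(g)C^{-k}=\om^{k\ep(g)}\al(g)$, so at $k=n$ the matrix $C^n$ commutes with $\al(\pi_1(N_K))$; by Schur's lemma $C^n$ is scalar, and after rescaling $C$ I may assume $C^n=I$, so $C$ is diagonalizable with $n$--th root of unity eigenvalues. Decomposing $\CC^n=\bigoplus_j V_j$ into the $\om^j$--eigenspaces of $C$, the relation shows that $\al(g)$ preserves each $V_j$ for $g\in\ker\ep$ while $\al(\mu)$ carries $V_j$ into $V_{j+1}$. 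The key step is a dimension count: since $\al(\mu)$ is invertible, the set of indices $j$ with $V_j\neq 0$ is closed under $+1$, hence is all of $\ZZ/n$, and the cyclic shift forces every $V_j$ to be one--dimensional. Thus $\al(\ker\ep)$ is simultaneously diagonal, so in particular abelian, giving $\al([\ker\ep,\ker\ep])=1$; that is, $\al$ kills $\pi_1(N_K)''$ and is metabelian.

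For the converse, suppose $\al$ is irreducible and metabelian. Since $\ker\ep\trianglelefteq\pi_1(N_K)$, Clifford's theorem shows $\al|_{\ker\ep}$ is semisimple, and as $\al(\ker\ep)$ is abelian it is simultaneously diagonalizable, with one--dimensional weights permuted by meridian conjugation. Here I would use that the quotient $\pi_1(N_K)/\ker\ep\cong\ZZ$ is \emph{infinite cyclic}: the inertia subgroup of a weight $\psi$ contains $\ker\ep$ with free abelian quotient, so $\psi$ extends to a character of it and every irreducible lying over $\psi$ is one--dimensional, forcing multiplicity one. Hence there are exactly $n$ distinct weights cyclically permuted by $\mu$, and in a weight basis $e_0,\dots,e_{n-1}$ we have $\al(\mu)e_j=a_je_{j+1}$ for scalars $a_j$. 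Setting $C=\operatorname{diag}(1,\om,\dots,\om^{n-1})$, a direct check gives $C\al(\mu)C^{-1}=\om\,\al(\mu)$ and $C\al(g)C^{-1}=\al(g)$ for $g\in\ker\ep$, so $C\al C^{-1}=\si\cdot\al$ on generators and hence everywhere; thus $\si\cdot\al\cong\al$ and $\xi_\al$ is fixed. (Alternatively, this normal form is exactly the classification of \cite{BF08}.)

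The main obstacle is establishing the structural normal form in each direction. In the ``fixed $\Rightarrow$ metabelian'' part the delicate point is the dimension count forcing each $C$--eigenspace to be one--dimensional, which relies on $\al(\mu)$ acting as a full $n$--cycle on the eigenspaces; in the converse it is the Clifford--theoretic multiplicity--one statement, and this is precisely where $H_1(N_K)=\ZZ$ is essential—for a non--cyclic quotient a higher weight multiplicity could occur and the explicit diagonal $C$ would fail the cyclic consistency $\om^n=1$ coming from the wrap--around index $n-1\mapsto 0$. The only routine points to dispatch are rescaling $C$ so that it lies in $\SL(n,\CC)$ and verifying that the reformulation via conjugacy applies verbatim to $\si\cdot\al$, which is again an $\SL(n,\CC)$ representation because $\om^n=1$.
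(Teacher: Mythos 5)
Your proof is correct, and it follows the same overall strategy as the paper (an intertwiner for the fixed character, Schur's lemma, a normal form in which the commutator subgroup is diagonal and the meridian is a cyclic shift, and the explicit diagonal conjugator $\operatorname{diag}(1,\om,\dots,\om^{n-1})$ for the converse), but the two halves are executed differently. For ``fixed $\Rightarrow$ metabelian'' the paper first pins down $\al(\mu)$: it observes that the characteristic polynomial of $\al(\mu)$ is a character invariant, deduces from $c(t)=c(\om t)$ that $c(t)=(-1)^nt^n+1$, diagonalizes $\al(\mu)$, and only then analyzes the intertwiner $A$, showing $A$ is a cyclic shift with $n$ distinct eigenvalues whose centralizer is a maximal torus containing $\al([\pi_1,\pi_1])$. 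You instead normalize the intertwiner $C$ first ($C^n$ scalar by Schur, rescale to $C^n=I$) and decompose $\CC^n$ into $C$--eigenspaces, which $\al(\mu)$ permutes cyclically and injectively, forcing each to be a line; this reaches the same conclusion while bypassing the characteristic-polynomial lemma entirely, and is slightly more economical. For the converse the paper simply cites the classification $\al\cong\al_{(n,\chi)}$ from \cite{BF08} and checks the conjugation by $P=\operatorname{diag}(1,\om,\dots,\om^{n-1})$, whereas you rederive the multiplicity-one weight decomposition via Clifford theory for the infinite-index normal subgroup $\ker\ep$ (your inertia-group argument is correct since the weight character is scalar on $\ker\ep$ and the inertia quotient is cyclic, forcing the isotypic components to be lines); this makes the argument self-contained at the cost of reproving part of the earlier classification. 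One cosmetic remark: you should note explicitly that $\si\cdot\al$ is again irreducible (twisting by a one-dimensional character preserves invariant subspaces), so that the equivalence ``equal characters iff conjugate'' applies to the pair $\al$, $\si\cdot\al$; this is implicit in both your write-up and the paper's.
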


In proving this result, we actually characterize the entire fixed point set  $X_n(N_K)^{\ZZ/n}$
in terms of characters $\xi_\al$ of the metabelian representations $\al=\al_{(n,\chi)}$
described in Subsection \ref{sec2-3} (see  Theorem \ref{thm4}).
When $n=2,$ it turns out that every metabelian $\SL(2,\CC)$ representation is dihedral and in this case
Theorem \ref{thm1} was first proved by F. Nagasato and Y. Yamaguchi (cf. \cite[Proposition 4.8]{NY08}).

As an application of Theorem \ref{thm1}, we prove a
result about the twisted Alexander polynomials associated  to metabelian representations.
This result was first shown by C. Herald, P. Kirk and C. Livingston in \cite{HKL08} using completely
different methods. Our approach is elementary and quite natural, and it is explained in Section  \ref{section:twialex}, where we apply it to give an answer
to a question raised by Hirasawa and Murasugi in \cite{HM09}.

\bigskip \noindent
{\bf Acknowledgments. \ } The authors would like to thank
Steven Boyer, Christopher Herald, Michael Heusener, Paul Kirk, Charles Livingston, Andrew Nicas and Adam Sikora for
generously sharing their knowledge, wisdom, and insight.
%SF We are especially grateful to Michael Heusener for bringing the results of \cite{AHJ07} to our attention.
%\footnote{The paper is a little odd now. We thank Heusener for referring us to a theorem, which by now we no longer use!}
%\footnote{second change}
We would also like to thank Fumikazu Nagasato and Yoshikazu Yamaguchi for communicating
the results of their paper to us.
%HB was supported by a grant from the Natural Sciences and
%Engineering Research Council of Canada.

%==========================================================
\section{The classification of metabelian representations of knot groups}

In this section we recall some results from \cite{BF08}
regarding the classification of metabelian  representations of knot groups.
% and we provide some general results on the twisted homology and cohomology groups.

%==========================================================
\subsection{Preliminaries}
Given a group $\pi$, we shall write  $\pi^{(n)}$ for the $n$--th term of the
derived series of $\pi$. These subgroups are defined inductively by setting
$\pi^{(0)}=\pi$ and $\pi^{(i+1)}=[\pi^{(i)},\pi^{(i)}]$.
The group $\pi$ is called \emph{metabelian} if $\pi^{(2)}=\{e\}$.

Suppose $V$ is a finite dimensional vector space over $\CC$.
A  representation $\varrho \co \pi\to \Aut(V)$
is called \emph{metabelian}
 if $\varrho$ factors through $\pi/\pi^{(2)}$. The representation $\varrho$ is
 called \emph{reducible}
if there exists a proper subspace $U\subset V$ invariant under $\varrho(\ga)$ for all $\ga\in  \pi.$
Otherwise $\varrho$ is called \emph{irreducible} or \emph{simple}.
If $\varrho$ is the direct sum of simple representations, then $\varrho$ is called \emph{semisimple}.

Two representations
$\varrho_1 \co \pi \to \Aut(V)$ and $\varrho_2 \co \pi \to \Aut(W)$ are called \emph{isomorphic}
if there exists an isomorphism $\phi \co V\to W$ such that $\phi^{-1} \circ \varrho_1(g)\circ \phi=\varrho_2(g)$ for all $g\in \pi$.

%===========================================================
\subsection{Metabelian quotients of knot groups}\label{section:metabk}

Let $K\subset \Si^3$ be a knot in an integral homology 3-sphere.
In the following we denote by $\widetilde{N}_K$
the infinite cyclic cover of $N_K$ corresponding to the abelianization $\pi_1(N_K)\to H_1(N_K) \cong \ZZ$.
Therefore $\pi_1(\widetilde{N}_K)=\pi_1(N_K)^{(1)}$ and
$$ H_1(N_K;\ZZ[t^{\pm 1}])=H_1(\widetilde{N}_K) \cong \pi_1(N_K)^{(1)}/\pi_1(N_K)^{(2)}.$$
The $\ZZ[t^{\pm 1}]$--module structure is
given on the right hand side
by $t^n\cdot g:=\mu^{-n}g\mu^n$, where $\mu$ is a meridian
of $K$.

For a knot $K$, we set $\pi:=\pi_1(N_K)$ and consider the short exact sequence
$$ 1\to \pi^{(1)}/\pi^{(2)}\to \pi/\pi^{(2)}\to\pi/\pi^{(1)}\to 1. $$
Since $\pi/\pi^{(1)}=H_1(N_K)\cong \ZZ$, this sequence splits and we get isomorphisms
$$ \begin{array}{rcccl} \pi/\pi^{(2)}
&\cong & \pi/\pi^{(1)} \ltimes \pi^{(1)}/\pi^{(2)}
&\cong &\ZZ \ltimes \pi^{(1)}/\pi^{(2)}   \cong \ZZ \ltimes H_1(N_K;\ZZ[t^{\pm 1}]) \\
    g &\mapsto &(\mu^{\ep(g)},\mu^{-\ep(g)}g) &\mapsto &(\ep(g),\mu^{-\ep(g)}g), \end{array}  $$
where the semidirect products are taken with respect to the $\ZZ$ actions defined by
letting $n \in \ZZ$ act by conjugation by $\mu^n$ on $\pi^{(1)}/\pi^{(2)}$ and by multiplication
by $t^n$ on $H_1(N_K; \ZZ[t^{\pm1}])$.

%This demonstrates the following lemma.
%
%\begin{lem}\label{lem4}
%Given a knot $K$, the set of
%metabelian representations of $\pi_1(N_K)$ can be canonically identified with the set of
%representations of $\ZZ \ltimes H_1(N_K;\ZZ[t^{\pm 1}])$.
%\end{lem}

%===========================================================
\subsection{Irreducible metabelian $\SL(n, \CC)$ representations of knot groups}
\label{sec2-3}
Let $K$ be a knot. We write $H=H_1(N_K;\ZZ[t^{\pm 1}])$.
The discussion of the previous section shows that irreducible metabelian $\SL(n,\CC)$ representations of $\pi_1(N_K)$ correspond precisely to the irreducible  $\SL(n,\CC)$ representations of $\ZZ\ltimes H$.

%Throughout this paper we will frequently make use of the following well--known facts:
%\begin{enumerate}
%\item  $H$ is a finitely generated
%$\ZZ[t^{\pm 1}]$--module such that multiplication by $t-1$ is an isomorphism.
%\item Given $n\in \NN$ there exists a natural identification $H/(t^n-1)\cong H_1(L_n)$, where $L_n$ denotes the $n$--fold cover of $\Si^3$ branched over $K$.
%\end{enumerate}

Let
$\chi \co H\to \CC^*$ be a character which factors through $H/(t^n-1)$ and suppose $z\in S^1$ with $z^n=(-1)^{n+1}$. Then it follows from
\cite[Section~3]{BF08} that, for $(j, h) \in \ZZ\ltimes H,$ setting
$$  \al_{(\chi,z)} (j,h) =
 \begin{pmatrix}
 0& &\dots &z \\
 z&0&\dots &0 \\
\vdots &\ddots &\ddots&\vdots \\
     0&\dots &z &0 \end{pmatrix}^j
     \begin{pmatrix} \chi(h) &0&\dots &0 \\
 0&\chi(th) &\dots &0 \\
\vdots &&\ddots &\vdots \\ 0&0&\dots &\chi(t^{n-1}h) \end{pmatrix}
$$
defines an  $\SL(n,\CC)$ representation whose isomorphism type of this representation does not depend on the choice of $z$.
%Note that $\al_{(n,\chi,z)}$ factors through $\ZZ\ltimes H/(t^n-1)$.
%When $z$ satisfies $z^n=(-1)^{n+1}$, the isomorphism type of  $\al_{(n,\chi,z)}$
%is actually independent of the choice of $z$ and in this case
%we will write $\al_{(n,\chi)}=\al_{(n,\chi,z)}$.
%Note that $\al_{(n,\chi)}$ defines an $\SL(n,\CC)$--representation and that  the isomorphism type of this representation is independent of the choice of $z$.
%\item If $z=1$, then we write $\be_{(n,\chi)}=\al_{(n,\chi,1)}$.
%\end{enumerate}
In our notation we will not normally
distinguish between metabelian representations
of $\pi_1(N_K)$ and representations of $ \ZZ \ltimes H$.

In the following we say that a character $\chi \co H\to \CC^*$ has \emph{order $n$} if
it factors through $H/(t^n-1)$, but not through $H/(t^\ell-1)$ for any $\ell < n$.
 Given a character  $\chi \co H\to \CC^*$, let $t^i\chi$ be the character defined by $(t^i\chi)(h)=\chi(t^ih)$.
Any character $\chi \co H\to \CC^*$ which factors through $H/(t^n-1)$ must have order $k$ for some
divisor $k$ of $n$.
The following is a combination of \cite[Lemma~2.2]{BF08} and  \cite[Theorem~3.3]{BF08}.

\begin{thm} \label{thm2}
Suppose  $\chi \co H  \to \CC^*$ is a character that factors through $H/(t^n-1)$.
\begin{enumerate}
\item[(i)] $\al_{(n,\chi)}\co \ZZ\ltimes H\to \SL(n,\CC)$ is irreducible if and only if the character $\chi$ has order $n$.
%\item[(ii)] If $\chi \co H\to \CC^*$ is a character of order $n$, then $\al_{(n,\chi)}$
%defines an irreducible $\SL(n,\CC)$ representation.
%SF
\item[(ii)] Given two characters $\chi,\chi'\co H\to \CC^*$ of order $n$, the representations
$\al_{(n,\chi)}$ and $\al_{(n,\chi')}$ are conjugate if and only if $\chi=t^k\chi'$ for some $k$.
\item[(iii)] For any irreducible representation  $\al \co\ZZ\ltimes H\to \SL(n,\CC)$
 there exists a character $\chi \co H\to \CC^*$ of order $n$ such that
$\al$ is conjugate to $\al_{(n,\chi)}$.
\end{enumerate}
\end{thm}

\section{Main results}
\subsection{Metabelian characters as fixed points}

Set $\om = e^{2 \pi i/n}$ and recall the action
of the cyclic group $\ZZ/n =\langle \si \mid \si^n=1\rangle$ on
 representations $\al \co \pi_1(N_K) \to \SL(n,\CC)$ obtained by
  setting $(\si \cdot \al)(g)= \om^{\ep(g)} \al(g)$ for all $g \in \pi_1(N_K),$
where $\ep \co \pi_1(N_K) \to H_1(N_K)=\ZZ$.

We begin with the following  lemma.

\begin{lemma}\label{lem3}
Suppose $\al \co \pi_1(N_K) \to \SL(n,\CC)$ is a representation
whose associated character $\xi_\al \in X_n(N_K)$ is a fixed point
of the $\ZZ/n$ action. Then up to conjugation,
we have
\begin{equation} \label{eq2}
\al(\mu) =
\begin{pmatrix} 0& &\dots &z \\
z&0&\dots &0 \\
\vdots &\ddots &\ddots&\vdots \\
     0&\dots &z &0 \end{pmatrix},
     \end{equation}
for some (in fact any) $z \in U(1)$ such that $z^{n}=(-1)^{n+1}.$\end{lemma}

\begin{proof}
Let $c(t) = \det (\al(\mu)-tI)$ denote the characteristic polynomial
of $\al(\mu),$ which we can write as
$$c(t) = (-1)^n t^n + c_{n-1} t^{n-1} + \cdots + c_1 t +1.$$
Note that $c(t)$ is determined by the character $\xi_\al \in X_n(N_K)$,
and so assuming $\xi_\al$ is a fixed point of the $\ZZ/n$ action, we conclude that
$\al(\mu)$ and $\om^k \al(\mu)$ have the same characteristic polynomials for all $k$.
In particular,
\begin{eqnarray*}
c(t)&=&\det(\al(\mu)-tI)\\
&=&\det (\om^{-1} \al(\mu)-tI) \\
&=& \det (\om^{-1} \al(\mu)-(\om^{-1} \om)tI)\\
&=&\det (\om^{-1} I) \det (\al(\mu)- \om tI) \\
&=&  \det (\al(\mu)-t \om I) = c(\om t).
\end{eqnarray*}
However, $\om^k \neq 1$ unless $n | k$, and this implies
$0=c_{n-1}=c_{n-2} = \cdots = c_1$ and $c(t) = (-1)^n t^n +1.$
In particular the matrix $\al(\mu)$ and the matrix appearing in
Equation (\ref{eq2}) have the same set of $n$ distinct eigenvalues. This implies that the two matrices are conjugate.
\end{proof}

In order to prove Theorem \ref{thm1}, we establish the following more
general result.
\begin{thm} \label{thm4}
The fixed point set  of the $\ZZ/n$ action on $X_n(N_K)$
consists of characters $\xi_\al$ of the metabelian representations
$\al = \al_{(n,\chi)}$ described in Section \ref{sec2-3}. In other words,
$$X_n(N_K)^{\ZZ/n} = \{ \xi_\al \mid \al =\al_{(n,\chi)} \text{ for } \chi \co H_1(N_K; \ZZ[t^{\pm 1}]) \to \CC^*\}.$$
\end{thm}

Notice that Theorem~\ref{thm1} can be viewed as the
special case of Theorem \ref{thm4} where
$\al$ is irreducible. (Recall that irreducible representations are conjugate if and only if they define the same character.)
Notice further that not every reducible metabelian
representation is of the form $\al_{(n,\chi)}$.

\begin{proof}

We first show that if $\al \co \pi_1(N_K) \to \SL(n,\CC)$ is given as $\al = \al_{(n,\chi)},$
then $\si \cdot \al$ is conjugate to $\al$. This of course implies that $\xi_\al = \xi_{\si \cdot \al}$.

Assume then that $\al = \al_{(n,\chi)}.$
Then we have $$\al(\mu) =\begin{pmatrix}
0& \dots& &z \\
z& 0&\dots&0 \\
\vdots &\ddots &\ddots&\vdots \\
     0&\dots &z &0 \end{pmatrix},$$
     where $z$ satisfies $z^{n}= (-1)^{n+1}.$
Further, $\al(g)$ is diagonal for all $g\in [\pi_1(N_K), \pi_1(N_K)]$.
By definition of $\si \cdot \al,$ we  see that
$$(\si\cdot\al) (\mu) = \om \al(\mu)=
\begin{pmatrix} 0& \dots &&\om z \\
\om z &0&\dots &0 \\
\vdots &\ddots &\ddots&\vdots \\
     0&\dots &\om z &0 \end{pmatrix}$$
     and that $(\si \cdot \al)(g) = \al(g)$
     for all $g \in [\pi_1(N_K), \pi_1(N_K)]$.
It follows easily  from Theorem \ref{thm2} (2)
that $\si \cdot \al$ and $\al_{(n,\chi)}$ are conjugate;
however it is easy to see this directly too.
Simply take   $$P = \begin{pmatrix} 1&&& 0 \\
 &\om \\
&&\ddots  \\
     0&&&\om^{n-1} \end{pmatrix},$$
     and compute that
     $\si \cdot \al =P \al P^{-1}$ as claimed.

We now show the other implication, namely that each point
$\xi \in X_n(N_K)^{\ZZ/n}$ in the fixed point set
can be represented as the character $\xi=\xi_\al$
of a metabelian representation $\al = \al_{(n,\chi)},$
where $\chi \co H_1(N_K; \ZZ[t^{\pm 1}]) \to \CC^*$
is a character that factors through $H_1(N_K; \ZZ[t^{\pm 1}])/(t^n-1)$,
hence has order $k$ for some $k$ dividing $n$.
(Note that Theorem \ref{thm2} (1) tells us that
$\al_{(n,\chi)}$ is irreducible if and only
if $\chi$ has order $n$.)

By the general results on representation spaces and character varieties (see \cite{LM85}),
it follows that every point in the character variety $X_n(N_K)$ can be represented
as $\xi_\al$ for some semisimple representation $\al \co \pi_1(N_K) \to \SL(n,\CC)$.
Further, two semisimple representations $\al_1$ and $\al_2$
determine the same character if and only if $\al_1$ is conjugate to $\al_2.$
(This is evident from the fact that the orbits of the semisimple representations under conjugation
are closed.)

Given $\xi \in X_n(N_K)^{\ZZ/n}$,
we can therefore suppose that $\xi=\xi_\al$ for some semisimple representation $\al$.
Clearly $\si \cdot \al$ is also semisimple, and
since $\xi_\al = \xi_{\si \cdot \al}$, we conclude from the above that $\al$ and $\si \cdot \al$
are conjugate representations. This means  that there exists a matrix $A  \in \SL(n,\CC)$
such that $A \al A^{-1} = \si \cdot \al$,
in other words, for all $g\in \pi_1(N_K),$ we have
\begin{equation} \label{eq2} A \al(g) A^{-1} = \om^{\ep(g)} \al(g).\end{equation}
Lemma \ref{lem3} implies
$\al(\mu)$ is conjugate to  the matrix in Equation (\ref{eq2}).
It is convenient to conjugate $\al$ so that $\al(\mu)$ is diagonal,
meaning that
$$\al(\mu)= \begin{pmatrix} z&&& 0 \\
 &\om z  \\
&&\ddots  \\
     0&&&\om^{n-1} z \end{pmatrix},
$$
where $z$ satisfies $z^n=(-1)^{n+1}$.

We now apply (\ref{eq2}) to the meridian to conclude that
$$A \al(\mu) = \om \al(\mu) A,$$
 which implies
$A = (a_{ij})$ satisfies $a_{ij}=0$ unless $j = i+1 \mod(n).$
Thus, we see that
$$A = \begin{pmatrix} 0&\la_1&0& \dots &0 \\
0&0&\la_2&\dots &0 \\
 \vdots& \vdots & \ddots& \ddots &\vdots \\
 0&0&\dots&0&\la_{n-1}\\
 \la_n & 0 & \dots &0 &0
\end{pmatrix}$$
for some $\la_1, \ldots, \la_n$ satisfying $\la_1 \cdots \la_n = (-1)^{n+1}.$

It is completely straightforward to see that the characteristic polynomial of
$A$ is given by
$$\det (A-tI) =
(-1)^n(t^n - (-1)^{n+1}).$$
From this, we conclude that $A$ has as its eigenvalues the
$n$ distinct $n$--th roots of $(-1)^{n+1}.$ In particular, the subset of $\SL(n,\CC)$
of matrices that commute with $A$ is just a copy of the unique maximal torus
$T_A \cong (\CC^*)^{n-1}$ containing $A$.

For any $g \in [\pi_1(N_K), \pi_1(N_K)]$, we have
$\al(g) = (\si \cdot \al)(g)$. Thus it follows that
$A \al(g) A^{-1} = \al(g),$ and this implies that $\al(g) \in T_A$ for
all $g \in [\pi_1(N_K), \pi_1(N_K)]$. This shows that
the restriction of $\al$ to the commutator subgroup
$[\pi_1(N_K), \pi_1(N_K)]$ is abelian, and we conclude from this that
$\al$ is indeed metabelian. Notice that this, and an application of  Theorem \ref{thm2} (3),
completes the proof in the case $\al$ is irreducible.

In the general case, it follows from the discussion in Section  \ref{section:metabk} that
$\al$ factors through $\ZZ\ltimes H_1(N_K;\ZZ[t^{\pm 1}])$.
Let $H =  H_1(N_K;\ZZ[t^{\pm 1}])$.
Given a character $\chi \co H\to\CC^*$ we define the associated weight space $V_\chi$ by setting
$$V_{\chi}=\{ v\in \CC^n \, |\, \chi(h)\cdot v=\al(h)v \text{ for all }h\in H\}.$$
Recall that  $A\cdot \al(h)\cdot A^{-1}=\al(h)$ for any $h\in H$. It is straightforward so show that $A$ restricts to an automorphism of $V_\chi$.
Since $H$ is abelian there exists at least one character $\chi\co H\to \CC^*$ such that
$V_{\chi}$ is non--trivial. For any $i$ we denote by $t^i\chi$ the character given by $(t^i\chi)(h)=\chi(t^i h), h\in H$.

Note that $A$ has $n$ distinct eigenvalues and therefore is diagonalizable.
Since $A$ restricts to an automorphism of $V_\chi$, there is an
eigenvector $v$ of $A$ which lies in $V_{\chi}$. Let $\la$ be the corresponding eigenvalue. By the proof of \cite[Theorem~2.3]{BF08},
the map $\al(\mu)$ induces an isomorphism $V_{\chi}\to V_{t\chi}$.
We now calculate
\[ A\cdot \al(\mu)  v=(A\al(\mu)A^{-1})\cdot Av=\om \al(\mu) \cdot \la v=\la \om \cdot \al(\mu)v,\]
i.e.  $\al(\mu) v \in V_{t\chi}$ is an eigenvector of $A$ with eigenvalue $\om \la$.

Iterating this argument, we see that $\al(\mu)^iv$ lies in $V_{t^i\chi}$ and is an eigenvector of $A$ with eigenvalue $\om^i \la$.
Since $\om$ is a primitive $n$--th root of unity, the eigenvalues $\la,\om \la,\dots,\om^{n-1}\la$ are all distinct, and this implies that
the corresponding eigenvectors $v, \al(\mu)v, \dots, \al(\mu)^{n-1}v$  form a basis for $\CC^n$.

Let $m$ be the order of $\chi$, i.e. $m$ is the minimal number such that $\chi=t^m\chi$.
By the above we see that $\CC^n$ is generated by $V_{\chi}, V_{t \chi}, \dots,V_{t^m\chi}$.
Since the characters $\chi, t\chi, \dots,t^m\chi$ are pairwise distinct, it follows  that  $\CC^n$ is given as the direct sum $V_{\chi}\oplus V_{t\chi}\oplus \dots \oplus V_{t^{m-1}\chi}$.

We write $k=\dim_{\CC}(V_{\chi})$ and note that $n=km$. We note further that $\al(\mu)^m$ has   eigenvalues given by the set
\begin{equation}\label{eq3}
\{z^m,z^me^{2\pi i/k},\dots,z^me^{2\pi i(k-1)/k}\},
\end{equation}
and each eigenvalue has multiplicity $m$.
Clearly $\al(\mu)^m$ restricts to an automorphism of $V_{t^i\chi}$ for $i=0,\dots,m -1$, and
equally clearly we see that the restrictions all give conjugate representations.
This implies that the restriction of $\al(\mu)^m$ to $V_{\chi}$ has  eigenvalues in the
set (\ref{eq3}) above, each occurring with multiplicity $1$.
In particular we can find a basis
 $\{v_1,\dots,v_k\}$  for $V_\chi$ in which the matrix of $\al(\mu)^m$ has the form
 $$\al(\mu^m) =\begin{pmatrix}
0& \dots& &z^m \\
z^m& 0&\dots&0 \\
\vdots &\ddots &\ddots&\vdots \\
     0&\dots &z^m &0 \end{pmatrix}.$$
It is now straightforward to verify that with respect to the ordered basis
$$\left\{ \begin{array}{cccc}
v_1,&z^{-1}\al(\mu)v_1,&\dots,&z^{-(m-1)}\al(\mu)^{m-1}v_1,\\
v_2,&z^{-1}\al(\mu)v_2,&\dots,&z^{-(m-1)}\al(\mu)^{m-1}v_2,\\
\vdots &\vdots &\dots& \vdots \\
v_k,&z^{-1}\al(\mu)w_k,&\dots,&z^{-(m-1)}\al(\mu)^{m-1}v_k\end{array}\right\} ,$$
$\al$ is given by $\al(n,\chi)$.
\end{proof}

%=======================
\subsection{Application to twisted Alexander polynomials}\label{section:twialex}
As an application, we now prove the following result regarding twisted Alexander polynomials of knots
corresponding to metabelian representations.
In the following, we use  $\Delta^\al_{K,i}(t)$ to denote the $i$--th twisted Alexander
polynomial for a given representation $\al \co \pi_1(N_K) \to \SL(n,\CC)$ as presented
in \cite{FV09}.

\begin{prop} \label{prop5}
Let $\al$ be  a metabelian  representation of the form
$\al=\al_{(n,\chi)} \co \pi_1(N_K) \to \SL(n,\CC)$. Then
\[  \Delta^\al_{K,0}(t) =\left\{ \begin{array}{rl} 1-t^n, &\mbox{ if $\chi$ is trivial}, \\ 1,& \mbox{ otherwise.}\end{array} \right.\]
Furthermore
the twisted Alexander polynomial
 $\Delta^\al_{K,1}(t)$ is actually a polynomial
in $t^n.$
\end{prop}

\begin{remark}
In their paper \cite{HKL08},  C. Herald, P. Kirk, and C. Livingston
prove the same result using an entirely different approach (cf. p.~10 of \cite{HKL08}).
We also point out that Proposition \ref{prop5} gives a positive answer to Conjecture A from a
recent paper by M. Hirasawa and K. Murasugi (see  \cite{HM09}).
\end{remark}

\begin{proof}
The proof of the first statement is not difficult. It is immediate
when $\chi$ is trivial, and it follows by a direct calculation when
 $\chi$ is non--trivial.
%or from \cite{FJV09}.

We now turn to the proof of the second statement.
For $\th \in U(1)$ and any representation
$\be:�\pi_1(N_K) \to \GL(n,\CC)$, define the $\th$-twist of $\be$
to be the representation sending $g \in \pi_1(N_K)$ to
$\th^{\ep(g)} \be(g),$ where $\ep\co \pi_1(N_K) \to \ZZ$ is
 determined by the orientation of $K$.
We denote the newly obtained representation
by $\be_\th \co \pi_1(N_K) \to \GL(n,\CC)$.
Note that in case $\al:�\pi_1(N_K) \to \SL(n,\CC)$ and
$\th= e^{2 \pi ik/n}$ is an $n$-th root of unity,
$\al_\th$ is again an $\SL(n,\CC)$ representation.
The proof of the proposition relies on the
formula
\begin{equation} \label{eq4}
\Delta_{K,1}^{\be_\th}(t) = \Delta_{K,1}^{\be}(\th t).
\end{equation}
This formula is well-known and follows directly from the definition of the twisted Alexander
polynomial.
Equation (\ref{eq4}) combines with Theorem \ref{thm1} to complete the proof, as we now explain.
Take $\om = e^{2 \pi i/n}$. If $\al=\al_{(n,\chi)}$ is metabelian,
then Theorem \ref{thm1} shows that its conjugacy class is fixed under the $\ZZ/n$
action. In particular, since $\al$ and $\al_\om$ are
conjugate, Equation (\ref{eq4}) shows that
$$\Delta^{\al}_{K,1}(t) =�\Delta^{\al_\om}_{K,1}(t)=�\Delta^{\al}_{K,1}(\om t).$$
Expanding
$\Delta^{\al}_{K,1}(t)= \sum a_i t^i$
and using the fact that $t^k = (\om t)^k$ if and only if $k$ is a multiple
of $n$,
this shows that $a_k =0$ unless $k$ is a
multiple of $n$ and this completes the proof.
\end{proof}

\end{document}